\def \R {\mathbb R}
\def \Z {\mathbb Z}
\def\cH{\mathcal{H}}
\def\cL{\mathcal{L}}
\def\cO{\mathcal{O}}
\def \eps {\epsilon}
\newcommand{\prob}[1]{\ensuremath{\mathbf{P}\left(#1\right)}}
\newcommand{\expect}[1]{\ensuremath{\mathbf{E}\left(#1\right)}}
\newcommand{\ind}[1]{\ensuremath{{1\!\!1}_{\{#1\}}}}
\def \Ordo {\cO}
\def\grad{\mathrm{grad}\,}
\def\div{\mathrm{div}\,}
\def\rot{\mathrm{rot}\,}
\def\curl{\mathrm{curl}\,}
\newcommand{\abs}[1]{\left|\,{#1}\,\right|}
\newcommand{\norm}[1]{\left\|\,{#1}\,\right\|}
\newtheorem {theorem}{Theorem}
\newtheorem {lemma}{Lemma}
\newtheorem* {theorem*}{Theorem}
\newtheorem* {thm*}{Theorem}
\newtheorem* {lemma*}{Lemma}
\newtheorem* {lem*}{Lemma}
\newtheorem* {corollary*}{Corollary}
\newtheorem* {cor*}{Corollary}
\newtheorem* {proposition*}{Proposition}
\newtheorem* {prop*}{Proposition}
\newtheorem* {definition*}{Definition}
\newtheorem* {def*}{Definition}
\newtheorem* {conjecture*}{Conjecture}
\newtheorem* {remark*}{Remark}
\newtheorem* {rem*}{Remark}
\theoremstyle{definition}
\newtheorem*{ack}{Acknowledgement}
\def\be{\begin{equation}}
\def\ee{\end{equation}}
\def\bea{\begin{eqnarray}}
\def\eea{\end{eqnarray}}
\newcommand{\wick}[1]{\ensuremath{:\!\! #1 \!\!:\,}}
\title{Superdiffusive bounds on self-repellent Brownian polymers and diffusion in the curl of the Gaussian free field in $d=2$}
\author{
{\sc B\'alint T\'oth} \qquad {\sc Benedek Valk\'o}
}
\begin{document}

\maketitle

\begin{abstract}

We consider two models of random diffusion in random environment in two dimensions. The first example is the \emph{self-repelling Brownian polymer}, this describes a diffusion pushed by the negative gradient of its own occupation time measure (local time). The second example is a \emph{diffusion in a fixed random environment  given by the curl of massless Gaussian free field}.

In both cases we show that the process is superdiffusive: the variance grows faster than linearly with time. We give lower and upper bounds of the order of $t \log \log t$, respectively, $t \log t$. We also present computations for an anisotropic version of the self-repelling Brownian polymer where we give lower and upper bounds of $t (\log t)^{1/2}$, respectively, $t \log t$. The bounds are given in the sense of Laplace transforms, the proofs rely on the resolvent method.

The true order of the variance for these processes is expected to be $t (\log t)^{1/2}$ for the isotropic and $t (\log t)^{2/3}$ for the non-isotropic case. In the appendix we present a non-rigorous  derivation of these scaling exponents.

\medskip\noindent
{\sc MSC2010:}
60K37, 60K40, 60F05, 60J55

\medskip\noindent
{\sc Key words and phrases:}
self-repelling random motion, diffusion in random environment, super-diffusivity, Gaussian free field
\end{abstract}

\section{Introduction}
\label{s:intro}

We consider two models of random motion in random environment in $d=2$:
\begin{itemize}
\item
\emph{self-repelling Brownian polymer} process, abbreviated in the sequel as SRBP, respectively,

\item
\emph{diffusion in the curl of massless Gaussian free field}, abbreviated in the sequel as DCGF.
\end{itemize}
In both cases the critical dimension of the model-class is $d=2$: for $d\ge3$ the displacements are diffusive (i.e.~the variance grows linearly in time) and for $d=2$ multiplicative logarithmic corrections are expected. We provide a rigorous proof for the superdiffusivity: we give lower and upper bounds of order $t\log\log t$, respectively, $t\log t$ on the variance of the displacement in the sense of Laplace transforms. The lower bounds are the more interesting, the upper bound being almost straightforward.

The SRBP model is continuous space-time counterpart of the so called \emph{true self-avoiding random walk} (TSAW). The class of models has a long history. It first appeared in the theoretical physics literature where the models were formulated, and based on scaling and renormalization group arguments dimension dependent scaling behavior was conjectured. See \cite{amit_parisi_peliti_83}, \cite{obukhov_peliti_83}, \cite{peliti_pietronero_87}. Shortly later and seemingly totally independently of the physics papers mentioned, similar questions arose in the probabilistic literature. See   \cite{norris_rogers_williams_87}, \cite{durrett_rogers_92} where further conjectures and partial results appeared. For a concise historical account of the models, conjectures and results we refer the reader to the survey \cite{toth_01} and the introduction of the more recent paper \cite{horvath_toth_veto_10}. Our present result completes the picture in the sense that after the $d=1$ and $d\ge3$ behaviour being more-or-less  clarified in \cite{toth_95}, \cite{toth_werner_98}, \cite{toth_veto_09}, \cite{tarres_toth_valko_09}, respectively, in \cite{horvath_toth_veto_10}, we now  settle the question of  superdiffusivity in $d=2$. However, note that our bounds are still far from being sharp. There is plenty of room left for improvements.

The DCGF model belongs to the class of random walks and diffusions in random environment. The drift field, being the curl of a scalar field in 2d, is divergence free and hence the environment seen by the random walker is a priori stationary. There are too many papers on this topics to list here. An instructive and rich survey, though not the most recent one, is \cite{kozlov_1985}. For a more recent survey see chapter 11 of  \cite{komorowski_landim_olla_2011}. It turns out that the model considered here, where the drift field is the curl of 2d massless Gaussian free field (mollified by convolving with a smooth approximate delta-function) is just on the boundary between diffusive and superdiffusive asymptotics. The more robustly superdiffusive cases were considered in \cite{komorowski_olla_02}. Our  results complete the picture presented in \cite{komorowski_olla_02} in the sense of proving  superdiffusivity in this borderline case.

The expected order of the diffusivity for both of these models is $t (\log t)^{1/2}$ and it is conjectured that this is universal among a wide class of two-dimensional isotropic models. (See \cite{landim_ramirez_yau} for another example.)
In the appendix (Section \ref{s:appendix})  we present a \emph{non-rigorous}, nevertheless very instructive scaling argument which explains this conjecture. The argument dates back to the late sixties, cf. \cite{alder_wainwright_67}, \cite{alder_wainwright_70}, \cite{forster_nelson_stephen_70}. In our opinion this sheds sharp light on the origins of superdiffusivity in tracer diffusion models. The argument shows that the isotropy of the model is important: in particular for non-isotropic two-dimensional models the argument gives $t (\log t)^{2/3}$ for the diffusivity. This was rigorously proved for the diffusivity of a second class particle for finite range asymmetric exclusion models by Yau \cite{yau_04} building on the results of \cite{landim_quastel_salmhofer_yau_04}. To illustrate the difference we also consider a non-isotropic version of SRBP, for this model we give lower and upper bounds on the diffusivity  of order $t(\log t)^{1/2}$ respectively $t\log t$.

The structure of the paper is the following.
In the remaining parts of the Introduction we fix notation (subsection \ref{ss:general_notation}), formulate the models in rigorous mathematical terms (subsection \ref{ss:processes}), describe the picture of the environment as seen by the random walker (subsection \ref{ss:environment_process}) and formulate the main result of the paper (subsection \ref{ss:results}). In section \ref{s:hilbert_space_computations} the relevant Fock space and the relevant operators therein are presented (subsection \ref{ss:spop}) and the variational problem is explicitly formulated (subsection \ref{ss:variational_formula}). Section \ref{s:computations} contains the computational parts of the proof: we give upper bounds on the nontrivial term in the variational problem (subsection \ref{ss:upper_bound_on_J3}) and prove the main lemmas needed for completing the proof of the main result (subsection \ref{ss:bounds_on_D}).
Subsection \ref{ss:nonisotrop} contains the computational parts of the proof for the non-isotropic model.
Finally, section \ref{s:appendix} is an appendix which contains the mentioned non-rigorous scaling argument.

\subsection{Some general notation}
\label{ss:general_notation}

Throughout this paper we are in two dimensions. $V:\R^2\to\R$ will denote a (fixed) approximate delta-function, that is a smooth ($C^{\infty}$), spherically symmetric function with sufficiently fast decay at infinity (exponential decay certainly suffices). We also impose the condition of \emph{positive definiteness} of $V$:
\[
\hat V(p) = \int_{\R^2} e^{i p\cdot x}V(x) dx
\ge0.
\]
Occasionally we will also use the notation $U:\R^2\to\R$ for the unique positive definite function which yields
\[
V=U*U,
\qquad
\hat U(p)=\sqrt {\hat V(p)}.
\]
A particular choice could be
\[
V(x) = \frac{e^{-\abs{x}^2/(2\sigma^2)}}{2\pi\sigma^2},
\qquad
U(x) = \frac{e^{-\abs{x}^2/\sigma^2}}{\pi\sigma^2},
\]
but the choice of the concrete function $V$ is of no importance.

Partial derivatives (with respect to space coordinates) will be denoted $\partial_i$, $i=1,2$.  The \emph{gradient} and \emph{curl} of a smooth scalar field $A:\R^2\to\R$ are the vector fields
\[
\grad A  = (\partial_1 A, \partial_2 A),
\qquad
\curl A  = (\partial_2 A, - \partial_1 A).
\]
The \emph{divergence} and \emph{rotation} of a smooth vector field $A:\R^2\to\R^2$ are the scalar fields
\[
\div A(x)  = \partial_1 A_1+\partial_2 A_2,
\qquad
\rot A(x)  = \partial_2 A_1-\partial_1 A_2.
\]
Given a vectorial expression $b=(b_1,b_2)$ we will denote
\[
\tilde b=(\tilde b_1, \tilde b_2) = (b_2, -b_1).
\]
$t\mapsto B(t)$ will always denote a two-dimensional standard Brownian motion.

\subsection{The displacement processes considered}
\label{ss:processes}

\bigskip
\noindent
{\bf The self-repelling Brownian polymer process (SRBP)}

\medskip
\noindent
Let $x\mapsto F(x)$  be a smooth gradient (that is rotation free) vector field on $\R^2$ with slow increase at infinity. We define the stochastic process $t\mapsto X(t)\in\R^2$ as the solution of the following SDE:
\begin{equation}
\label{srbp_diff}
d X(t)=
\left(F(X(t))-\int_0^t \grad V (X(t)-X(u))d u\right) dt + \sqrt{2}\,dB(t).
\end{equation}
Introducing the \emph{occupation time measure} (also called \emph{local time} in this paper)
\[
\ell(t, A) = \abs{\{0<s\le t: X(s)\in A\}}
\]
where $A\subset \R^2$ is any measurable domain, we can rewrite the previous SDE as follows:
\begin{equation}
\label{srbp_diff2}
dX(t)=
\big(F - \grad V*\ell(t,\cdot) \big) (X(t))dt + \sqrt{2}\, dB(t).
\end{equation}
The form \eqref{srbp_diff2} of the driving mechanism, shows explicitly the phenomenological meaning of the law of the process: it is pushed by the negative gradient of its own local time towards domains less visited in the past.

\bigskip
\noindent
{\bf The non-isotropic SRBP}

\noindent
Let now $F_1:\R^2\to\R$  be a smooth scalar field on $\R^2$ with slow increase at infinity.
The process is again defined as a solution of an SDE similar to (\ref{srbp_diff}), but instead of $\grad$ we use the operator $(\partial_1, 0)$:
\[
dX_1(t)=\left(F_1(X(t))-\int_0^t \partial_1 V(X(t)-X(u))d u\right) dt + \sqrt{2}\, dB_1(t),
\qquad
dX_2(t)= \sqrt{2}\, dB_2(t).
\]
Or, expressed in terms of gradient of local time:
\[
dX_1(t)=
\big(F_1 - \partial_1 V*\ell(t,\cdot) \big) (X(t))dt + \sqrt{2}\, dB_1(t),
\qquad
dX_2(t)= \sqrt{2}\, dB_2(t).
\]
Now, only the first coordinate of the displacement is pushed by the corresponding negative gradient component of mollified local time. The second coordinate moves as memoryless Brownian motion.

\bigskip
\noindent
{\bf Diffusion in the curl of Gaussian free field (DCGF)}

\medskip
\noindent
Now let $x\mapsto F(x)$ be a smooth curl (that is divergence free) vector field on $\R^2$ with slow increase at infinity. We define the stochastic process $t\mapsto X(t)\in\R^2$ as the solution of the SDE
\begin{equation}
\label{dcre_diff}
d X(t)=
F(X(t)) dt + \sqrt{2}\, dB(t).
\end{equation}
This is a diffusion in the drift field $F$.

\subsection{The environment seen from the moving point}
\label{ss:environment_process}

\bigskip
\noindent
{\bf SRBP}

\medskip
\noindent
The environment profile appearing on the right-hand side of \eqref{srbp_diff2}, as seen in a moving coordinate frame tied to the current position of the displacement process is $t\mapsto\eta(t,\cdot)$:
\[
\eta(t,x)
 =
\big(F - \grad V*\ell(t,\cdot) \big) (X(t)+x)
\]
with initial value
\[
\eta(0,x)
 =
F(x).
\]
$t\mapsto\eta(t,\cdot)$ is a Markov process with continuous sample paths in the Fr\'echet space
\[
\Omega  =  \big\{\omega\in C^{\infty}(\R^2\to\R^2)\,:\,
\rot\omega\equiv0,\,\,\,
\norm{\omega}_{k,m,r}<\infty \big\}
\]
where $\norm{\omega}_{k, m,r}$ are the seminorms
\begin{equation}
\label{seminorms}
\norm{\omega}_{k,m,r}  =  \sup_{x\in\R^d} \,
\big(1+\abs{x}\big)^{-1/r} \, \abs{\partial^{\abs{m}}_{m_1,\dots,m_d}\omega_k(x)}
\end{equation}
defined for $k=1,2$, multiindices $m=(m_1,\dots,m_d)$, $m_j\ge0$, and $r\ge1$.

Let $\pi$ be the Gaussian measure on $\Omega$ defined by the covariances
\[
\int_\Omega \omega_k(x) d\pi(\omega)=0,
\qquad
K_{kl}(x-y)
=
\int_\Omega \omega_k(x) \omega_l(y) d\pi(\omega)=
V*g_{kl}(x-y)
\]
where
\[
g_{kl}(x)
=
-\partial^2_{kl} \log \abs{x}
=
\frac{\tilde x_k \tilde x_l}{\abs{x}^{-3}}.
\]
The Fourier transform of the correlations is
\[
\hat K_{kl}(p)
=
\frac{p_kp_l}{\abs{p}^{2}} \hat V(p).
\]
It is clear that $\omega$ distributed according to $\pi$ \emph{the gradient of the massless Gaussian free field smeared out by convolution with $U$}.

It has been proved in \cite{horvath_toth_veto_10} that the Gaussian probability measure $\pi$ is stationary and ergodic for the Markov process $\eta_t$. That is: if the initial vector field $F$ is sampled from this distribution, then the vector field profile seen from the position of the moving particle will have the same distribution at any later time. (Although \cite{horvath_toth_veto_10} deals with the $d\ge 3$ case the same proof applies for $d=2$. See also \cite{tarres_toth_valko_09} for the 1d case.) The process will be considered in this stationary regime. That is, the  initial profile $\eta_0=F$ is distributed according to the stationary measure $\pi$.

\bigskip
\noindent
{\bf Non-isotropic SRBP}

\medskip
\noindent
The situation is similar to the previous case, but our environmental profile will be the scalar field  $\xi:\R^2\to \R$
\[
\xi(t,x)=\big(F - \partial_1 V*\ell(t,\cdot) \big) (X(t)+x).
\]
This will be now a Markov process with continuous sample paths on
\[
\Omega  =
\big\{\omega\in C^{\infty}(\R^2\to\R)\,:\,
\norm{\omega}_{m,r}<\infty \big\}
\]
and the seminorms $\norm{\omega}_{m,r}$ are defined very similarly to \eqref{seminorms}.

Computations very similar to those in \cite{tarres_toth_valko_09} and \cite{horvath_toth_veto_10} show that the Gaussian measure $\pi$ defined by the expectations and covariances
\[
\int_\Omega \omega(x) d\pi(\omega)=0,
\qquad
K(x-y)
=
\int_\Omega \omega(x) \omega(y) d\pi(\omega)=V(x-y)
\]
is stationary and ergodic for this Markov process.

In order to keep unified and consistent notation, in the anisotropic case we will still use vectorial notation for the scalar field $\omega$, as follows:
\[
\omega_1(x)=\omega(x), \qquad \omega_2(x)\equiv0.
\]
In this case the probability measure $\pi$ will be denoted as Gaussian distribution of a two component vector field with covariances
\[
K_{kl}(x-y)=V(x-y) \delta_{k,1}\delta_{l,1}.
\]

\bigskip
\noindent
{\bf DCGF}

\medskip
\noindent
The environment seen by the moving point is now $t\mapsto\eta(t,\cdot)$ defined as:
\[
\eta(t,x)
 =
F(X(t)+x).
\]
Now, $t\mapsto\eta(t,\cdot)$ is again a Markov process with continuous sample paths in the Fr\'echet space
\[
\Omega  =  \big\{\omega\in C^{\infty}(\R^2\to\R^2)\,:\,
\div\omega\equiv0,\,\,\,
\norm{\omega}_{k,m,r}<\infty \big\}
\]
where the seminorms $\norm{\omega}_{k, m,r}$ are formally defined as in \eqref{seminorms}.

Let now $\pi$ be  the Gaussian probability measure on $\Omega$ defined by the expectations and covariances
\[
\int_\Omega \omega_k(x) d\pi(\omega)=0,
\qquad
K_{kl}(x-y):=
\int_\Omega \omega_k(x)\omega_l(y) d\pi(\omega)
=
V*g_{kl}(x-y),
\]
where now
\[
g_{kl}(x)
=
-\tilde\partial^2_{kl} \log \abs{x}
=
\frac{x_k  x_l}{\abs{x}^{-3}}.
\]
The Fourier transform of the correlations is now
\[
\hat K_{kl}(p)=\frac{\tilde p_k\tilde p_l}{\abs{p}^{2}} \hat V(p).
\]
The random vector field $\omega_{kl}(x)$ is now \emph{the curl of the massless Gaussian free field smeared out by convolution with $U$}.

It is a well known fact that the the process $\eta(t)$ of a diffusion \eqref{dcre_diff} in any translation invariant and ergodic \emph{divergence free} vector field $F$ is (time) stationary and ergodic (see e.g. \cite{kozlov_1985}, or chapter 11 of \cite{komorowski_landim_olla_2011}).  In particular, this holds if the drift field $F$ is sampled from the distribution $\pi$.

\subsection{Superdiffusive bounds}
\label{ss:results}

For both processes considered we denote
\[
E(t) = \expect{\abs{X(t)}^2},
\qquad
\hat E(\lambda) = \int_0^\infty E(s)e^{-\lambda s} ds.
\]

The main result of the present paper is the superdiffusive lower bound stated in the following theorem, which is valid for both processes.

\begin{theorem}
\label{thm:main}
Let $X(t)$ be either one of the (isotropic) processes SRBP or DCGF started with initial environment profile sampled from the respective stationary distributions. There exist constants $0< C_1,C_2 <\infty$ such that for $0<\lambda<1$ the following bounds hold
\begin{equation}
\label{diffusivity_bounds}
C_1\lambda^{-2}\log\abs{\log\lambda}
\le  \hat E(\lambda) \le
C_2 \lambda^{-2}\abs{\log\lambda}.
\end{equation}
\end{theorem}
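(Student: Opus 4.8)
Both the lower and upper bounds will come from a variational characterization of $\hat E(\lambda)$ in the Hilbert space of the stationary environment process. The key object is the generator $L$ of the environment Markov process $\eta(t)$, which decomposes as $L = -S + A$, where $S \geq 0$ is the symmetric (self-adjoint) part coming from the Brownian noise and $A$ is the antisymmetric part coming from the drift. In the stationary regime the mean-square displacement is governed by the velocity field $\varphi$ (a specific degree-one element of the associated Fock space, essentially $\grad V$ evaluated at the origin for SRBP), and one has the resolvent identity
\[
\frac{\lambda^2}{2}\,\hat E(\lambda)
=
\big\langle \varphi, (\lambda - L)^{-1} \varphi \big\rangle.
\]
Since $\hat E(\lambda) \sim \lambda^{-2}$ would be the diffusive (non-logarithmic) order, the entire content of the theorem is to show that the scalar product above grows like $\log|\log\lambda|$ from below and like $|\log\lambda|$ from above.

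**The upper bound is the easier half.** I would exploit the standard sector-type bound for the resolvent: because $A$ is antisymmetric, $\langle \varphi, (\lambda - L)^{-1}\varphi\rangle$ is dominated by a quantity controlled purely through the symmetric part, of the form $\langle \varphi, (\lambda + S)^{-1}\varphi \rangle$ after accounting for the off-diagonal coupling. Passing to Fourier variables, $S$ acts as multiplication by $|p|^2$ on the one-particle sector, while $\hat V(p)$ provides the spectral weight $\sim \hat K(p)$. The bound then reduces to estimating an integral of the schematic shape
\[
\int_{\R^2} \frac{\hat V(p)}{\lambda + |p|^2}\, \frac{p_1^2}{|p|^2}\, dp,
\]
whose infrared behavior, because $\hat V(0)>0$ and the angular factor is bounded, produces exactly a logarithmic divergence $\sim |\log\lambda|$ as $\lambda \downarrow 0$. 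This is the "almost straightforward" direction the authors advertise in the introduction.

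**The lower bound is the genuine obstacle, and it requires extracting the off-diagonal (two-particle) contribution of the antisymmetric generator.** A naive lower bound using only the symmetric part gives merely the diffusive order $\lambda^{-2}$ and misses the logarithm entirely; one must capture the feedback of $A$ mapping the one-particle sector into the two-particle sector and back. I would set up the variational formula
\[
\big\langle \varphi, (\lambda - L)^{-1} \varphi \big\rangle
=
\sup_{\psi}\Big( 2\langle \varphi, \psi\rangle - \langle \psi, (\lambda + S)\psi\rangle - \langle A\psi, (\lambda+S)^{-1} A\psi\rangle \Big),
\]
and insert a carefully chosen trial function $\psi$ supported on the first two Fock-space sectors. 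The main labor—and the main difficulty—is to estimate the nontrivial term $\langle A\psi, (\lambda+S)^{-1} A\psi\rangle$ (the "$J_3$" term flagged in the introduction) from above, so that the remaining lower bound survives. This is where the logarithm is won: the two-particle convolution structure combined with the $|p|^2$ decay of $S^{-1}$ turns the relevant momentum integral into a nested pair of logarithms, yielding the $\log|\log\lambda|$ lower bound rather than the expected sharp $(\log\lambda)^{1/2}$. I expect the optimization over the cutoff scale in the trial function, balanced against the $J_3$ estimate, to be the technical heart of the proof.
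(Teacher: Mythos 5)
Your skeleton coincides with the paper's: the resolvent/variational method, the upper bound obtained by discarding the antisymmetric term and reducing to $(\varphi,(\lambda+S)^{-1}\varphi)\le C\abs{\log\lambda}$, the restriction of the supremum to the one-particle sector for the lower bound, and the identification of $J_3=(A\psi,(\lambda+S)^{-1}A\psi)$ as the term that must be bounded from above. The genuine gap is in \emph{how} you bound $J_3$. A plain Schwarz inequality gives
\[
J_3(\hat v)\le\int_{\R^2}\hat V(p)\,D(\lambda,\abs{p})\,\hat v(p)^2\,dp,
\qquad
D(\lambda,\abs{p})=4\int_{\R^2}\hat V(q)\,\frac{(p\cdot q)^2}{\abs{p}^2\abs{q}^2}\,\frac{dq}{\lambda+\abs{p-q}^2},
\]
and everything hinges on whether $D(\lambda,\abs{p})\le C\abs{\log(\lambda+\abs{p}^2)}$ --- which after optimizing $\hat v$ yields $\int_{\abs{p}<1}\hat V(p)\big(\lambda+(1+D)\abs{p}^2\big)^{-1}dp\ge C\log\abs{\log\lambda}$ --- or merely $D(\lambda,\abs{p})\le C\abs{\log\lambda}$, which yields only a constant, i.e.\ no superdiffusivity at all. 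For DCGF the angular factor is $(p\times q)^2/(\abs{p}^2\abs{q}^2)$, which vanishes as $q\to p$ and tames the singularity of $(\lambda+\abs{p-q}^2)^{-1}$ exactly where it matters, so the good $p$-dependent logarithm comes out and your plan goes through. For the isotropic SRBP the factor is $(p\cdot q)^2/(\abs{p}^2\abs{q}^2)$, which equals $1$ at $q=p$; Schwarz alone gives only $C\abs{\log\lambda}$ and your plan, as stated, fails. Your phrase about the convolution structure producing ``a nested pair of logarithms'' assumes precisely the estimate that breaks down here.

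The paper's fix, absent from your proposal, is to split the $q$-integral at $\abs{p-q}=\abs{p}/3$: Schwarz is applied only on $\abs{p-q}\ge\abs{p}/3$, where the denominator is already of size $\lambda+\abs{p}^2/9$, while on the near-diagonal region one uses $(\hat v(p)-\hat v(q))^2\le\abs{p-q}^2\sup\abs{\nabla\hat v}^2$ to cancel the singular kernel. This in turn forces an \emph{explicit} trial function $\hat v^*(p)=c\,p_1 h(\lambda+\abs{p}^2)$ with $h(x)=1/(x\log(x+x^{-1}))$, whose gradient must be (and is) controlled separately; a generic ``carefully chosen'' $\psi$ will not do. The authors themselves flag this as the point where the isotropic models are harder than the anisotropic exclusion setting of Landim--Quastel--Salmhofer--Yau. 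Two smaller inaccuracies: the identity $\tfrac{\lambda^2}{2}\hat E(\lambda)=(\varphi,(\lambda-L)^{-1}\varphi)$ is not exact (the martingale and cross terms contribute a harmless diffusive $O(\lambda^{-2})$, which is all the paper uses), and the trial function lives in $\cH_1$ alone, not in the first two Fock sectors.
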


\noindent
{\bf Remarks:}
(1)
Modulo Tauberian inversion, these bounds mean in real time
\[
C_3 t \log\log t
\le \expect{\abs{X(t)}^2} \le
C_4 t \log t,
\]
with $0< C_3,C_4 <\infty$ and for $t$ sufficiently large.
\\
(2)
Based on the Alder-Wainwright argument sketched in the Appendix (Section \ref{s:appendix}) the expected true order is $\expect{\abs{X(t)}^2}\asymp t(\log t)^{1/2}$.
\bigskip

The proof of Theorem \ref{thm:main} follows the main lines of \cite{landim_quastel_salmhofer_yau_04}. However on the computational level there are some notable differences. The bounds exploited in \cite{landim_quastel_salmhofer_yau_04} which rely inter alia on a clever application of Schwarz's inequality don't yield superdiffusive lower bounds in our case. This is due to the isotropy of our model as opposed to the anisotropy of the asymmetric simple exclusion models. From the Alder-Wainwright scaling argument it follows (see the Appendix) that for anisotropic models $\expect{\abs{X(t)}^2}\asymp t(\log t)^{2/3}$, while for isotropic models $\expect{\abs{X(t)}^2}\asymp t(\log t)^{1/2}$ is naturally expected. Thus, the isotropic models are less superdiffusive than the anisotropic ones. This phenomenon manifests also on computational level: proof of superdiffusive lower bound for isotropic models exhibits some subtle a priori extra difficulty.

For the non-isotropic model we have the following bounds:

\begin{theorem}
\label{thm:masodik}
If $X(t)$ is the non-isotropic SRBP then we get the bounds
\begin{equation}
\label{diffusivity_bounds_1}
C_1\lambda^{-2}\abs{\log\lambda}^{1/2}
\le \hat E(\lambda) \le
C_2 \lambda^{-2}\abs{\log\lambda}.
\end{equation}
\end{theorem}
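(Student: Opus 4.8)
The plan is to attack Theorem~\ref{thm:masodik} by the resolvent (variational) method, following the same architecture as the isotropic case but exploiting the anisotropic structure to get the improved lower bound $\lambda^{-2}\abs{\log\lambda}^{1/2}$. First I would pass to the environment process $\xi(t)$ on the Fock space built over the stationary Gaussian field $\pi$ with covariance $K_{kl}(x-y)=V(x-y)\delta_{k,1}\delta_{l,1}$, and express the Laplace-transformed mean-square displacement $\hat E(\lambda)$ in terms of the resolvent of the generator. Writing the generator as $L=-S+A$ with $S$ the self-adjoint (symmetric) part and $A$ the antisymmetric part, the standard Green--Kubo/resolvent identity gives a variational formula for $\hat E(\lambda)$ of the form $\hat E(\lambda)\asymp \lambda^{-2}\sup_{\psi}\{2\langle \varphi,\psi\rangle-\langle\psi,(\lambda+S+A^*S^{-1}A)\psi\rangle\}$, where $\varphi$ is the current (drift) observable. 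This is exactly the apparatus set up in Sections~\ref{s:hilbert_space_computations} and~\ref{s:computations}; I would invoke the variational formula proved there and the operator bounds of subsection~\ref{ss:bounds_on_D}.

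The key computation is to estimate the nontrivial operator $B:=A^*(\lambda+S)^{-1}A$ (the ``$J_3$''-type term) on the relevant sector of Fock space, and here the anisotropy enters decisively. In Fourier variables the symmetric part $S$ acts as multiplication by $\abs{V}$-weighted $\abs{p}^2$ and the drift lives only in the first coordinate, so the quadratic form of $B$ reduces to a convolution kernel in momentum space. The lower bound comes from testing the variational problem against a carefully chosen trial function $\psi$ supported on the low-momentum (infrared) region $\abs{p}\lesssim\lambda^{1/2}$; the contribution of $B$ to the denominator must be shown to grow only like $\abs{\log\lambda}^{1/2}$ rather than the $\abs{\log\lambda}$ that the crude Schwarz bound of \cite{landim_quastel_salmhofer_yau_04} would give. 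Concretely, I expect the anisotropic geometry to make the angular integration in the convolution contribute an extra factor that is integrable in a way producing a square-root rather than a full logarithm, and the remaining radial integral $\int \frac{dr}{r}$ over the dyadic scales between $\lambda^{1/2}$ and $O(1)$ to be tamed by the off-diagonal decay of $B$. The upper bound $\lambda^{-2}\abs{\log\lambda}$ is the easy direction: one discards the positive operator $B$ (or bounds it below by zero) and estimates $\langle\varphi,(\lambda+S)^{-1}\varphi\rangle$ directly, which yields the logarithm from the two-dimensional infrared divergence $\int_{\abs{p}>\lambda^{1/2}}\frac{dp}{\abs{p}^2}$.

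The main obstacle will be the sharp lower bound on the $B$-term: one has to show that the anisotropic current--current correlation, after the Fock-space second-quantization, produces precisely the $\abs{\log\lambda}^{1/2}$ scaling and not a larger power. This requires controlling the full nested resolvent structure (the operator $B$ is itself built from a resolvent, so iterating it naively reintroduces logarithms) and choosing the trial function so that it is simultaneously nearly optimal for the quadratic form $\langle\psi,(\lambda+S)\psi\rangle$ and for the cross term $\langle\varphi,\psi\rangle$, while keeping $\langle\psi,B\psi\rangle$ subdominant. I would handle this by a scale-by-scale (dyadic) decomposition in $\abs{p}$, estimating the angular integrals using the explicit anisotropic kernel $\tilde p_k\tilde p_l/\abs{p}^2$ specialized to the single active coordinate, and then summing the resulting geometric-in-scale contributions. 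The isotropic obstruction noted after Theorem~\ref{thm:main} is absent here precisely because the anisotropy breaks the rotational symmetry that otherwise forces cancellations in the drift observable, so the lower-bound trial function is more directly effective.
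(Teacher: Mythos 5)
Your overall architecture (variational formula; discard the $J_3$/$B$-term for the upper bound; trial function for the lower bound) matches the paper, but you have misplaced where the exponent $1/2$ comes from, and as stated your lower-bound strategy would not go through. You propose to show that the anisotropy improves the bound on the nested-resolvent term itself from $\abs{\log\lambda}$ down to $\abs{\log\lambda}^{1/2}$ via the angular integration \emph{inside} the convolution kernel. That is not what happens, and no such improvement is needed or available: the paper applies the crude Schwarz bound and keeps the full logarithm,
\[
J_3(\hat u)\le \int_{\R^2}\hat V(p)\,D(\lambda,\abs{p})\,p_1^2\,\hat u(p)^2\,dp,
\qquad
D(\lambda,\abs{p})\le C\abs{\log(\lambda+\abs{p}^2)}.
\]
The square root is gained one step later and for a different reason: because the drift acts only on the first coordinate, the logarithmically enhanced term sits in front of $p_1^2$ only, not $\abs{p}^2$, so that
\[
(\varphi,R_\lambda\varphi)\ \ge\ \int_{\abs{p}<1/2}\hat V(p)\,\frac{dp}{\lambda+\abs{p}^2+C\,p_1^2\,\abs{\log\lambda}}\,,
\]
and in polar coordinates the angular average $\int_0^{2\pi}\big(\lambda+r^2+C\abs{\log\lambda}\,r^2\cos^2\alpha\big)^{-1}d\alpha\asymp \big(r^2\sqrt{\abs{\log\lambda}}\big)^{-1}$ for $r^2\gg\lambda$; multiplying by the radial logarithm $\int_{\sqrt\lambda}^{\eps}r^{-1}dr\asymp\abs{\log\lambda}$ yields $\sqrt{\abs{\log\lambda}}$. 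So the exponent $1/2$ is produced by the angular integration over the \emph{external} momentum in the final integral, not by a refined scale-by-scale estimate of the operator $B$.

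Two further concrete errors. First, the covariance of the anisotropic SRBP environment is $K_{kl}(x)=V(x)\delta_{k,1}\delta_{l,1}$, i.e.\ $\hat K_{kl}(p)=\hat V(p)\delta_{k,1}\delta_{l,1}$; the kernel $\tilde p_k\tilde p_l/\abs{p}^2$ you invoke belongs to the DCGF model and plays no role here. Second, a trial function supported on the infrared region $\abs{p}\lesssim\lambda^{1/2}$ cannot produce any logarithm: on that region $2J_1-J_2$ is $O(1)$. The divergence comes from the intermediate scales $\lambda^{1/2}\lesssim\abs{p}\lesssim 1$, and the (near-)optimizer is $\hat u^*(p)\propto\big(\lambda+\abs{p}^2+Cp_1^2\abs{\log\lambda}\big)^{-1}$, which is not infrared-localized. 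Your upper-bound argument is correct and coincides with the paper's.
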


\noindent
{\bf Remark:}
Modulo Tauberian inversion, these bounds mean in real time
\[
C_3 t (\log t)^{1/2}
\le \expect{\abs{X(t)}^2} \le
C_4 t \log t,
\]
with $0< C_3,C_4 <\infty$ and for $t$ sufficiently large.

\section{Fock space computations}
\label{s:hilbert_space_computations}

\subsection{Spaces and operators}
\label{ss:spop}

For basics of Gaussian Hilbert spaces (that is: Fock spaces) see \cite{simon_74} or \cite{janson_97}. Let $(\Omega,\pi)$ be either one of the probability spaces identified in subsection \ref{ss:environment_process}. The Gaussian Hilbert space $\cH = \cL^2(\Omega, \pi)$ is naturally graded
\[
\cH=\bigoplus_{n=0}^\infty \cH_n
\]
where
\[
\cH_n  =  \mathrm{span}\{\wick{\omega_{l_1}(x_1)\dots\omega_{l_n}(x_n)} \,:\, l_1,\dots,l_n=1,2, \,\,\, x_1,\dots, x_n\in\R^2\}.
\]
Here and in the sequel $\wick{Z_1\dots Z_n}$ denotes the Wick monomial formed by the jointly Gaussian random variables $Z_1,\dots,Z_n$.

The operators $\nabla_k$, $k=1,2$,  and $\Delta$ are the usual ones. Their action on Wick monomials is
\begin{align*}
&
\nabla_k
\wick{\omega_{l_1}(x_1)\dots\omega_{l_n}(x_n)}
 =
\sum_{m=1}^n
\wick{\omega_{l_1}(y_1)\dots\partial_k\omega_{l_m}(x_m)\dots\omega_{l_n}(y_n)}, \\
&
\Delta
 =
\sum_{k=1}^2 \nabla_k^2.
\end{align*}
They are defined as unbounded operators on $\cH$ by graph closure.
We will also need the following creation and annihilation operators. Again, we give their action on Wick monomials:
\begin{align*}
&
a^*_{k}
\wick{\omega_{l_1}(x_1) \dots \omega_{l_n}(x_n)}
 =
\wick{\omega_k(0) \omega_{l_1}(x_1) \dots \omega_{l_n}(x_n)}
\\
&
a_{k}
\wick{ \omega_{l_1}(x_1) \dots \omega_{l_n}(x_n) }
 =
\sum_{m=1}^n
K_{k l_m}(x_m)
\wick{\omega_{l_1}(y_1) \dots \cancel{\omega_{l_m}(x_m)} \dots \omega_{l_n}(y_n)}.
\end{align*}
Note that the creation and annihilation operators defined in the context of the different models models are \emph{not} unitary equivalent.

In both isotropic cases (SRBP and DCGF) the infinitesimal generators of the semigroups of the stationary Markov process $\eta_t$, acting on $\cL^2(\Omega,\pi)$, is expressed as
\[
G
=
-S+A_++A_-,
\]
\[
S=-\Delta,
\qquad
A_+=\sum_{k=1}^2 a^*_k\nabla_k,
\qquad
A_-=\sum_{k=1}^2 \nabla_k a_k.
\]
This follows from standard computations in the case of DCGF. For the SRBP process it relies on somewhat more complex considerations with some involvement of Malliavin calculus. For details see \cite{horvath_toth_veto_10}.
Note, that the infinitesimal generators of the two processes, although formally similarly expressed, are \emph{not} unitary equivalent.

For the non-isotropic SRBP, since $\omega_2(x)\equiv0$, $a_2^* = a_2 =0$,  the asymmetric part of the generator is slightly modified:
\[
A_+= a^*_1\nabla_1,
\quad
A_-=\nabla_1 a_1.
\]

\subsection{The variational formula}
\label{ss:variational_formula}

We write the displacement as sum of a martingale and compensator term. In each of the cases we will concentrate on the first coordinate component.
\begin{equation}
\label{martingale+compensator_}
X_1(t)=B_1(t)+\int_0^t\varphi(\eta_s) ds,
\end{equation}
where
\[
\varphi:\Omega\to \R,
\qquad
\varphi(\omega)=\omega_1(0).
\]
Note, that actually $\varphi\in\cH_1$.

The martingale term is diffusive, so in order to prove superdiffusive bounds we have to focus on the integral on the right hand side of \eqref{martingale+compensator_}. Laplace transformation yields:
\begin{equation}
\label{laplace_transform}
\int_0^\infty e^{-\lambda t}
\expect{\left(\int_0^t\varphi(\eta_s) ds\right)^2} dt
=
\lambda^{-2}(\varphi, R_\lambda \varphi),
\end{equation}
where
\[
R_\lambda=\int_0^\infty e^{-\lambda t} e^{t G} = (\lambda I - G)^{-1},
\]
is the resolvent of the infinitesimal generator $G$.
We are going to prove bounds for the right hand side of \eqref{laplace_transform}.

The following variational formula is straightforward, see e.g. \cite{landim_quastel_salmhofer_yau_04}:
\[
(\varphi, R_\lambda \varphi)
=
\sup_{\psi\in\cH}
\big\{
2(\varphi, \psi) - (\psi, (\lambda+S)\psi) - (A\psi, (\lambda+S)^{-1} A\psi)
\big\}
\]

The upper bounds in \eqref{diffusivity_bounds}, \eqref{diffusivity_bounds_1} drop out essentially for free:
\[
(\varphi, R_\lambda \varphi)
\le
\sup_{\psi\in\cH}
\big\{2(\varphi, \psi) - (\psi, (\lambda+S)\psi)\big\}
=
(\varphi, (\lambda I-\Delta)^{-1}\varphi)
\le
C\log\lambda.
\]
The last bound follows from straightforward computations on so-called diffusion in random scenery, and proves the upper bound in both Theorem \ref{thm:main} and \ref{thm:masodik}.

In order to get the more interesting lower bound we write first
\begin{align}
\notag
(\varphi, R_\lambda \varphi)
&\ge
\sup_{\psi\in\cH_1}
\big\{
2(\varphi, \psi) - (\psi, (\lambda+S)\psi) - (A\psi, (\lambda+S)^{-1} A\psi)
\big\}
\\
\label{lower_bound}
&=
\sup_{\psi\in\cH_1}
\big\{
2(\varphi, \psi) - (\psi, (\lambda+S)\psi) - (A_+\psi, (\lambda+S)^{-1} A_+\psi)
\big\}.
\end{align}
Note that for any $\psi\in\cH_1$, $A_-\psi=0$.

We will treat the isotropic SRBP and DCGF processes and the anisotropic SRBP separately.

\bigskip
\noindent
{\bf Isotropic SRBP and DCGF models:}

\medskip
\noindent
We write the variational test function $\psi\in\cH_1$ as
\[
\psi(\omega)=\sum_{l=1}^2 \int_{\R^2} u_l(x)\omega_l(x) dx,
\]
where $u:\R^2\to\R^2$ is such that, $u(-x)=u(x)$, (and thus $\hat u:\R^2\to\R^2)$, and
\begin{align*}
\mathrm{SRBP:}
&&&&&&&
p\times \hat u(p) \equiv 0,
\qquad
&&
\int_{\R^2}
\hat V(p) \left(p\cdot \hat u(p)\right)^2 dp<\infty,
\\
\mathrm{DCGF:}\hfill\hfill
&&&&&&&
p\cdot \hat u(p) \equiv 0,
\qquad
&&
\int_{\R^2}
\hat V(p) \left(p\times \hat u(p)\right)^2 dp<\infty.
\end{align*}
We define $v:\R^2\to i\,\R$ be
\begin{align*}
\mathrm{SRBP:}
&&&&&&&
v=i \, \div u, \qquad
&&
\hat v(p) = p\cdot \hat u(p),
\\
\mathrm{DCGF:}
&&&&&&&
v=i \, \rot u, \qquad
&&
\hat v(p) = p\times \hat u(p).
\end{align*}
Then, in both cases $\hat v:\R^2\to\R$ and
\begin{equation}
\label{conditions_for_hat_v}
\hat v(-p)=-\hat v(p),
\quad\text{ and }
\quad
\int_{\R^2}
\frac{\hat V(p)}{\abs{p}^2}\hat v (p)^2 dp <\infty.
\end{equation}

The first two  terms on the right hand side of the variational lower bound \eqref{lower_bound} are computed directly. \begin{align}
\label{first_srbp}
\mathrm{SRBP:}
&&&&&&&
(\varphi,\psi)
=
\int_{\R^2}
\hat V(p)
\frac{p_1}{\abs{p}^2}  \hat v(p) dp
=:J_1(\hat v)
\\
\label{first_dcgf}
\mathrm{DCGF:}
&&&&&&&
(\varphi,\psi)
=
\int_{\R^2}
\hat V(p)
\frac{p_2}{\abs{p}^2}  \hat v(p) dp
=:J_1(\hat v)
\\
\label{second_sipdre}
\mathrm{SRBP \& DCGF:}
&&&&&&&
(\psi, (\lambda + S) \psi)
=
\int_{\R^2}
\hat V(p)
\frac{\lambda+\abs{p}^2}{\abs{p}^2} \hat v(p)^2 dp
=:J_2(\hat v).
\end{align}
In order to compute the third term on the right hand side of \eqref{lower_bound} we first consider the SRBP case. We have
\[
A_+\psi=
-\sum_{k,l=1}^2\int_{\R^2}\partial_ku_l(x)\wick{\omega_k(0)\omega_l(x)} dx,
\]
and hence
\begin{align}\label{harmadik}
&(A_+\psi, (\lambda+S)^{-1}A_+\psi)=
\\
&\hskip10mm
\sum_{k,l=1}^2\sum_{m,n=1}^2\int_{\R^2}\int_{\R^2}\int_{\R^2}
\partial_mu_n(y)\partial_ku_l(x) g_\lambda(z) \expect{\wick{\omega_m(0)\omega_n(y)}\wick{\omega_k(z)\omega_l(x+z)}} dx dy dz. \nonumber
\end{align}
Here $g_\lambda(z)$ is the integral kernel of the operator $(\lambda-\Delta)^{-1}$ in $\R^2$, with Fourier transform
\[
\hat g_\lambda (p)= \frac{1}{\lambda + \abs{p}^2}.
\]
For the DCGF case one finds very similar formulas, with $\partial_k$ replaced by $\tilde \partial_k$.

The expectation inside the integral in \eqref{harmadik} is computed by exploiting the fact that the fields $\omega$ are Gaussian and hence the four point functions arising  are expressed in terms of the covariances. After some straightforward computations, eventually we get:
\begin{align}
\label{third_srbp}
\mathrm{SRBP:}
&&&&&&&
(A_+\psi, (\lambda+S)^{-1} A_+\psi)
=
\\
\notag
&&&&&&&\qquad
\int_{\R^2}\int_{\R^2}
\hat V(p) \hat V(q)
\frac{(p\cdot q)^2 }{\abs{p}^2\abs{q}^2} \frac{1}{\lambda+\abs{p-q}^2}
\left(\hat v(p)-\hat v(q)\right)^2 dqdp
=:J_3 (\hat v),
\\[15pt]
\notag
\mathrm{DCGF:}
&&&&&&&
(A_+\psi, (\lambda+S)^{-1} A_+\psi)
=
\\
\notag
&&&&&&&\qquad
\int_{\R^2}\int_{\R^2}
\hat V(p) \hat V(q)
\frac{(p\times q)^2 }{\abs{p}^2\abs{q}^2} \frac{1}{\lambda+\abs{p-q}^2}
\left(\hat v(p)-\hat v(q)\right)^2 dqdp
=:J_3(\hat v).
\end{align}
And the variational problem \eqref{lower_bound} becomes
\begin{equation}
\label{variational_problem}
(\varphi, R_\lambda \varphi)
\ge
\sup_{\hat v} \big(2J_1(\hat v) - J_2 (\hat v)- J_3(\hat v)\big),
\end{equation}
with supremum taken over functions $\hat v:\R^2\to\R$ satisfying conditions \eqref{conditions_for_hat_v}.

Given the explicit expressions \eqref{first_srbp}, \eqref{first_dcgf} and \eqref{second_sipdre} for $J_1(\hat v)$ and $J_2(\hat v)$, in order to prove the superdiffusive lower bound in \eqref{diffusivity_bounds} we need to prove efficient upper bounds on $J_3(\hat v)$.\bigskip

\bigskip
\noindent
{\bf Anisotropic SBRP case:}

\medskip
\noindent
The test function $\psi\in\cH_1$ is written in the form
\[
\psi(\omega)=\int_{\R^2} u(x) \omega(x) dx
 \]
with $u:\R^2\to \R$, $u(x)=u(-x)$. Then
\begin{align*}
&
(\varphi,\psi)=\int_{\R^2} \hat u(p) \hat V(p) d p
=:J_1(\hat u),
\\
&
(\psi, (\lambda + S) \psi)\notag
=\int_{\R^2}\hat V(p) ({\lambda+\abs{p}^2})\hat u(p)^2 dp
=:J_2(\hat u)
\\
&
(A_+\psi, (\lambda+S)^{-1} A_+\psi)
=\int_{\R^2}\int_{\R^2}
\hat V(p) \hat V(q) (p_1 \hat u(p)-q_1 \hat u(q))^2 dp dq
=:J_3(\hat u).
\end{align*}

\section{Computations}
\label{s:computations}

The following three subsections contain the computational parts of the proofs for the isotropic cases (subsections \ref{ss:upper_bound_on_J3} and \ref{ss:bounds_on_D}), respectively the anisotropic case (subsection \ref{ss:nonisotrop}).

\subsection{Upper bound on $J_3$, isotropic models}
\label{ss:upper_bound_on_J3}

Since the DCGF model is somewhat simpler than the SRBP, we will treat them in this order.

\bigskip
\noindent
{\bf DCGF:}

\medskip
\noindent
Denote
\[
D(\lambda, \abs{p})
=
4
\int_{\R^2}
\hat V(q)
\frac{(p\times q)^2 }{\abs{p}^2\abs{q}^2} \frac{1}{\lambda+\abs{p-q}^2}
dq.
\]
When estimating the function $D(\lambda,\abs{p})$ (see  Lemma \ref{lem:dcgf_D_bound} below and its proof) the vectorial product $p\times q$ will help substantially. As we shall see later this is not the case for the SRBP model.

By Schwarz's inequality we get
\[
J_3(\hat v)
\le
\int_{\R^2}
\hat V(p) D(\lambda, \abs{p}) \hat v(p)^2 dp.
\]
and the variational problem \eqref{variational_problem} is readily solved by
\[
\hat v^*(p)
=
\frac{p_2}{\lambda + \left(1+D(\lambda, \abs{p})\right)\abs{p}^2},
\]
which eventually yields the lower bound
\begin{equation}
\label{lower_bound_for_DCGF}
(\varphi, R_\lambda \varphi)
\ge
\int_{\R^2}
\hat V(p)
\big(\lambda + \left(1+D(\lambda, \abs{p})\right)\abs{p}^2\big)^{-1} dp.
\end{equation}
Thus, a suitable \emph{upper bound on} $D(\lambda, \abs{p})$ will provide a good \emph{lower bound on} $(\varphi,R_\lambda\varphi)$.

In subsection \ref{ss:bounds_on_D} we will prove the following

\begin{lemma}
\label{lem:dcgf_D_bound}
In the DCGF case, for $\lambda<1$ and $\abs{p}\le 1$ we have
\begin{equation}
\label{dcgf_D_bound}
D(\lambda, \abs{p})
\le
C \abs{\log(\lambda + \abs{p}^2)}.
\end{equation}
\end{lemma}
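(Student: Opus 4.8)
The goal is to bound
\[
D(\lambda, \abs{p})
=
4
\int_{\R^2}
\hat V(q)
\frac{(p\times q)^2 }{\abs{p}^2\abs{q}^2} \frac{1}{\lambda+\abs{p-q}^2}
dq
\]
by $C\abs{\log(\lambda+\abs{p}^2)}$ for $\lambda<1$, $\abs{p}\le1$. Let me think about where the logarithmic divergence comes from and how the cross product controls things.

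**Understanding the integral.** Since $\hat V$ decays fast (it's the Fourier transform of a Schwartz-class approximate delta), the integral is effectively cut off at $\abs{q}\lesssim 1$. The potentially dangerous regions are (i) $q$ near $0$, where $\abs{q}^{-2}$ blows up, and (ii) $q$ near $p$, where the resolvent kernel $(\lambda+\abs{p-q}^2)^{-1}$ blows up. The factor $(p\times q)^2/(\abs{p}^2\abs{q}^2)=\sin^2\theta_{pq}\le 1$ is what saves us — crucially it vanishes when $q$ is parallel to $p$.

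**The plan.** I would split the $q$-integral into three regions and estimate each:

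First I would dispose of the region $\abs{q}\ge 1/2$ (say). There $\hat V(q)$ decays rapidly, $\abs{q}^{-2}\le 4$, the angular factor is $\le1$, and the resolvent kernel is bounded (since $\lambda+\abs{p-q}^2$ can only be small when $q\approx p$, but $\abs p\le 1$ so that's already covered by the other regions, or one bounds crudely). This contributes an $O(1)$ term. Because $\hat V$ has fast decay the tail $\abs q$ large is harmless.

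**The small-$q$ region.** Near $q=0$ the factor $\abs{q}^{-2}$ is singular, but $(p\times q)^2/\abs p^2 = \abs q^2\sin^2\theta$ exactly cancels it: $(p\times q)^2/(\abs p^2\abs q^2)=\sin^2\theta_{pq}\le1$. So after cancellation the integrand near $q=0$ behaves like $\hat V(q)(\lambda+\abs{p-q}^2)^{-1}\le \hat V(q)(\lambda+\abs p^2/4)^{-1}$ or so for $\abs q\le\abs p/2$, giving at worst a bounded contribution times $\abs{\log(\lambda+\abs p^2)}$ — in fact this region is benign and the cross product is exactly what makes it so. This is the point the authors flag: "the vectorial product $p\times q$ will help substantially."

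**The region $q$ near $p$ — the main obstacle.** Here the logarithm is born. Write the integrand as $\hat V(q)\sin^2\theta_{pq}\,(\lambda+\abs{p-q}^2)^{-1}$. On the annulus $\abs p/2\le\abs q\le 2$ we have $\abs q^{-2}\le 4\abs p^{-2}$ and $\hat V$ bounded, so
\[
D(\lambda,\abs p)\lesssim \int_{\abs{q-p}\le C}\frac{1}{\lambda+\abs{p-q}^2}\,dq
\lesssim \int_0^C\frac{r\,dr}{\lambda+r^2}
\asymp\abs{\log\lambda},
\]
which is not yet good enough — I need the sharper $\abs{\log(\lambda+\abs p^2)}$. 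The improvement must again come from $\sin^2\theta_{pq}$: when $q$ is very close to $p$, the angle $\theta_{pq}$ is small, so $\sin^2\theta_{pq}$ is small precisely where the resolvent is most singular. Quantitatively, writing $q=p+w$, one has $\sin\theta_{pq}=\abs{p\times w}/(\abs p\abs q)$, so $\sin^2\theta_{pq}\lesssim \abs{w}^2/\abs p^2$ for the component of $w$ transverse to $p$; integrating $\abs w^2\abs p^{-2}(\lambda+\abs w^2)^{-1}$ over $\abs w\le C$ gives something like $\abs p^{-2}\cdot O(1)$, and combining the two competing estimates — the crude $\abs{\log\lambda}$ for $\abs w\gtrsim\abs p$ and the gain for $\abs w\lesssim\abs p$ — should cut the effective lower cutoff of the radial integral from $\lambda^{1/2}$ up to $(\lambda+\abs p^2)^{1/2}$, yielding $\abs{\log(\lambda+\abs p^2)}$.

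**Where the difficulty lies.** The genuinely delicate step is the last one: extracting $\abs{\log(\lambda+\abs p^2)}$ rather than the easier $\abs{\log\lambda}$. This requires carefully exploiting the vanishing of $(p\times q)^2$ along the direction $q\parallel p$ in combination with the singularity of the resolvent, and tracking how the angular factor trades against the radial singularity. I would set up polar-type coordinates centered appropriately (either at the origin using the angle $\theta_{pq}$ directly, or at $p$ writing $q=p+w$ and splitting $w$ into components parallel and transverse to $p$), and prove the bound by a two-scale argument separating $\abs{p-q}\lesssim\abs p$ from $\abs{p-q}\gtrsim\abs p$. Everything else — the tail in $\abs q$ and the small-$q$ cancellation — is routine once the cross-product identity $(p\times q)^2/(\abs p^2\abs q^2)=\sin^2\theta_{pq}$ is in hand.
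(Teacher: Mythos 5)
Your strategy is sound and would yield the lemma, but it follows a genuinely different route from the paper. The paper passes to polar coordinates centred at the origin, $q=(r\cos t,r\sin t)$ with $p$ taken real by rotational symmetry, so that (after discarding the harmless tail $\abs{q}>2$ and the bounded factor $\hat V$) the problem reduces to $\int_0^2\int_0^{2\pi}\frac{\sin^2 t}{\lambda+r^2+p^2-2pr\cos t}\,r\,dt\,dr$, and then disposes of the angular integral in one stroke via the symmetrization identity
\[
\frac{\sin^2 t}{A+B\cos t}+\frac{\sin^2(t+\pi)}{A+B\cos(t+\pi)}=\frac{2A\sin^2 t}{A^2-B^2\cos^2 t}\le\frac{2}{A},\qquad A\ge B\ge0,
\]
applied with $A=\lambda+r^2+p^2$ and $B=2pr$; the angular integral is thus at most $2\pi/(\lambda+r^2+p^2)$, and the radial integral $\int_0^2\frac{r\,dr}{\lambda+r^2+p^2}$ produces $\abs{\log(\lambda+p^2)}$ immediately. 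Your proposal instead performs a real-space two-scale decomposition around $q=p$: on $\abs{q-p}\le\abs{p}/2$ you trade $\sin^2\theta_{pq}=(p\times w)^2/(\abs{p}^2\abs{q}^2)\le 4\abs{w}^2/\abs{p}^2$ (valid there since $\abs{q}\ge\abs{p}/2$) against the resolvent to get an $O(1)$ contribution, while on $\abs{q-p}\ge\abs{p}/2$ the crude bound $\sin^2\theta_{pq}\le1$ already gives $\int_{\abs{p}/2}^{C}\frac{r\,dr}{\lambda+r^2}\asymp\abs{\log(\lambda+\abs{p}^2)}$ --- so the far region produces the sharp logarithm directly rather than through a ``competition'' of estimates. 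The one loose spot is your intermediate claim that integrating $\abs{w}^2\abs{p}^{-2}(\lambda+\abs{w}^2)^{-1}$ over $\abs{w}\le C$ gives $\abs{p}^{-2}\cdot O(1)$: over a ball of fixed radius that bound is useless (it blows up as $p\to0$), and the estimate $\sin^2\theta_{pq}\lesssim\abs{w}^2/\abs{p}^2$ itself requires $\abs{q}\gtrsim\abs{p}$; both issues disappear once you restrict, as your final paragraph says you would, to $\abs{w}\le\abs{p}/2$, where the integral is at most $\abs{p}^{-2}\cdot\pi\abs{p}^2=O(1)$. The paper's symmetrization is slicker and shorter; your decomposition is more elementary and is essentially the same device the paper deploys for the SRBP case (the split into $J_{31}$ and $J_{32}$ according to whether $\abs{p-q}$ exceeds $\abs{p}/3$), so it is a perfectly viable alternative here.
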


\noindent
Inserting \eqref{dcgf_D_bound} into \eqref{lower_bound_for_DCGF} we readily obtain
\[
(\varphi, R_\lambda \varphi)
\ge
\int_{\R^2} \ind{\abs{p}<1}
\hat V(p)
\big(\lambda + \left(1+ C \abs{\log(\lambda + \abs{p}^2)}\right)\abs{p}^2\big)^{-1} dp > C\log\abs{\log\lambda}.
\]
which proves the lower bound in Theorem \ref{thm:main} for the DCGF.

\bigskip
\noindent
{\bf SRBP:}

\medskip
\noindent
Applying a similar argument to the SRBP process will not yield super diffusive lower bound: the bound corresponding to \eqref{dcgf_D_bound} would be
\[
D(\lambda, \abs{p}) \le C \abs{\log \lambda}
\]
and this is simply not sufficient for superdiffusivity.

In order to get the true superdiffusive lower bound we have to split the integral on the right hand side of \eqref{third_srbp} according whether $\abs{p-q}$ is small or large and apply Schwarz's inequality only in the latter part. Let
\begin{align*}
&
J_{31}(\hat v):=
\int_{\R^2}\int_{\R^2}
\hat V(p) \hat V(q)
\frac{1}{\lambda+\abs{p-q}^2}
\left(\hat v(p)-\hat v(q)\right)^2
\ind{\abs{p-q}\ge \abs{p}/3}
dqdp
\\
&
J_{32}(\hat v)=
\int_{\R^2}\int_{\R^2}
\hat V(p) \hat V(q)
\frac{1}{\lambda+\abs{p-q}^2}
\left(\hat v(p)-\hat v(q)\right)^2
\ind{\abs{p-q}\le \abs{p}/3}
dqdp
\\
&
J_3(\hat v) \le J_{31}(\hat v)+J_{32}(\hat v).
\end{align*}
Then, by applying Schwarz's inequality
\begin{align*}
J_{31}(\hat v)
&
\le
2
\int_{\R^2}\int_{\R^2}
\hat V(p) \hat V(q)
\frac{1}{\lambda+\abs{p-q}^2}
\left(\hat v(p)^2+\hat v(q)^2\right)
\ind{\abs{p-q}\le \abs{p}/3}
dqdp
\\
&
\le
\int_{\R^2}
\hat V(p)
D(\lambda, \abs{p}) \hat v(p)^2
dp,
\end{align*}
where now
\[
D(\lambda, \abs{p})=
4
\int_{\R^2}
\hat V(q) \frac{1}{\lambda + \abs{p-q}^2} \ind{\abs{p-q}\ge \abs{p}/3} dq.
\]
In subsection \ref{ss:bounds_on_D} we prove the following

\begin{lemma}
\label{lem:srbp_D_bound}
In the isotropic SRBP case, for $\lambda<1$ and $\abs{p}\le 1$ we have
\[
D(\lambda, \abs{p})
\le
C \abs{\log(\lambda + \abs{p}^2/9)}.
\]
\end{lemma}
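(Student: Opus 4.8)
The plan is to estimate $D(\lambda,\abs{p})$ by a direct polar-coordinate computation, using only that $\hat V$ is bounded and integrable; both follow from $V$ being $C^\infty$ with exponential decay, giving $\norm{\hat V}_\infty\le\norm{V}_{L^1}<\infty$ and $\hat V$ rapidly decreasing, so $\int_{\R^2}\hat V<\infty$. The conceptual point is that the role played in the DCGF estimate (Lemma \ref{lem:dcgf_D_bound}) by the angular factor $(p\times q)^2/(\abs{p}^2\abs{q}^2)$ is here taken over entirely by the cutoff $\ind{\abs{p-q}\ge\abs{p}/3}$: it is precisely this cutoff that keeps the denominator bounded below by $\lambda+\abs{p}^2/9$ and thereby produces the correct argument of the logarithm, converting the naive (and, as noted in the text, insufficient) bound $C\abs{\log\lambda}$ into the $\abs{p}$-sensitive one.

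First I would substitute $w=q-p$, so that
\[
D(\lambda,\abs{p})
=
4\int_{\abs{w}\ge\abs{p}/3}
\frac{\hat V(p+w)}{\lambda+\abs{w}^2}\,dw .
\]
Now the $\abs{p}$-dependence enters only through the lower radial limit $\abs{p}/3$, and I would split the region of integration at $\abs{w}=1$. On the far tail $\abs{w}>1$ one \emph{cannot} afford to bound $\hat V$ by its supremum, since $\int_{\abs{w}>1}(\lambda+\abs{w}^2)^{-1}dw$ diverges logarithmically at infinity; instead I use $(\lambda+\abs{w}^2)^{-1}\le1$ together with the integrability of $\hat V$:
\[
4\int_{\abs{w}>1}\frac{\hat V(p+w)}{\lambda+\abs{w}^2}\,dw
\le
4\int_{\R^2}\hat V(p+w)\,dw
=:C_1<\infty .
\]

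On the annulus $\abs{p}/3\le\abs{w}\le1$ the factor $\hat V(p+w)$ is harmless and I bound it by $M:=\norm{\hat V}_\infty$, leaving an integral computed exactly in polar coordinates:
\[
4M\int_{\abs{p}/3\le\abs{w}\le1}\frac{dw}{\lambda+\abs{w}^2}
=
8\pi M\int_{\abs{p}/3}^{1}\frac{s\,ds}{\lambda+s^2}
=
4\pi M\,\log\frac{\lambda+1}{\lambda+\abs{p}^2/9}
\le
4\pi M\big(\log2+\abs{\log(\lambda+\abs{p}^2/9)}\big),
\]
using $\lambda+1\le2$ and $-\log x\le\abs{\log x}$. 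Adding the two contributions gives
\[
D(\lambda,\abs{p})\le C_1+4\pi M\log2+4\pi M\,\abs{\log(\lambda+\abs{p}^2/9)} .
\]

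The stated form $C\abs{\log(\lambda+\abs{p}^2/9)}$ then follows once $\abs{\log(\lambda+\abs{p}^2/9)}$ is bounded below, which is exactly the regime that matters when the estimate is fed into the variational lower bound for the SRBP, namely small $\lambda$ and small $\abs{p}$, where $\abs{\log(\lambda+\abs{p}^2/9)}\to\infty$ absorbs the additive constants into $C$. The computation itself is routine; the two points needing care are the ones isolated above — that the large-$\abs{w}$ tail must be controlled through the decay (integrability) of $\hat V$ rather than its sup, and that the lower cutoff $\abs{p}/3$ is precisely what replaces the missing angular factor and yields the $\abs{p}$-dependent argument $\lambda+\abs{p}^2/9$ inside the logarithm.
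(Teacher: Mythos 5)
Your proof is correct and follows essentially the same route as the paper's: split the integral at a fixed radius in $\abs{p-q}$ (the paper uses $2$, you use $1$), control the tail via the decay/integrability of $\hat V$, and bound the near part by $\norm{\hat V}_\infty$ times the explicit radial integral $\int_{\abs{p}/3}\frac{s\,ds}{\lambda+s^2}$, which produces the $\log(\lambda+\abs{p}^2/9)$ term. Your remark that the additive constants are absorbed only in the regime where $\abs{\log(\lambda+\abs{p}^2/9)}$ is bounded away from zero is a fair (and correct) reading of the implicit convention in the paper's own statement.
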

Now,  we give upper bound on $J_{32}$:
\begin{align*}
J_{32}
&
\le
\int_{\R^2}\int_{\R^2}
\hat V(p) \hat V(q)
\frac{1}{\lambda+\abs{p-q}^2}
\left( (p-q) \cdot \nabla \hat v (r(p,q))\right)^2
\ind{\abs{p-q}\le \abs{p}/3}
dqdp
\\
&
\le
\frac14
\int_{\R^2}
\hat V(p) \abs{p}^2 \sup_{r:\abs{r-p}<\abs{p}/3} \abs{\nabla\hat v(r)}^2 dp=: J'_{32}(\hat v)
\end{align*}
We choose the variational function
\begin{equation}
\label{choice}
\hat v^*(p):= c p_1 h(\lambda+|p|^2),
\qquad
h(x):=\frac{1}{x \log(x+x^{-1})}.
\end{equation}
Then at one hand, for $c$ sufficiently small we get
\begin{equation}
\label{srbp_upper_bound1}
J_1(\hat v^*) - J_2(\hat v^*) - J_{31}(\hat v^*) \ge C \log\abs{\log \lambda}.
\end{equation}
On the other hand, in the next subsection we also prove

\begin{lemma}
\label{lem:harmadik}
With the choice \eqref{choice} of the variational function we have
\begin{equation}
\label{srbp_bound_on_J32}
J_{32}'(\hat v^*)\le C.
\end{equation}
\end{lemma}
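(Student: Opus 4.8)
The plan is to compute $\nabla\hat v^*$ explicitly, derive a clean pointwise bound of the form $\abs{\nabla\hat v^*(p)}^2\le C(\lambda+\abs{p}^2)^{-2}L(\lambda+\abs{p}^2)^{-2}$ with $L(x):=\log(x+x^{-1})$, transfer this bound across the supremum over the ball $\{\abs{r-p}<\abs{p}/3\}$ by comparability of arguments, and finally reduce $J_{32}'$ to a one-dimensional integral whose convergence is exactly what the choice \eqref{choice} of $h$ guarantees.

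First I would set $x=\lambda+\abs{p}^2$ and write, since $\hat v^*(p)=c\,p_1\,h(x)$,
\[
\nabla\hat v^*(p)=c\,h(x)\,(1,0)+2c\,p_1\,h'(x)\,p.
\]
Expanding the square produces three terms, $c^2 h(x)^2$, $4c^2 p_1^2 h(x)h'(x)$ and $4c^2 p_1^2\abs{p}^2 h'(x)^2$. The key analytic input is that $L(x)=\log(x+x^{-1})\ge\log 2$ for all $x>0$, so $h$ has no singularity, and that $xL'(x)=(x^2-1)/(x^2+1)$ satisfies $\abs{xL'(x)}\le1$. From $h=1/(xL)$ I get $h(x)=\tfrac{1}{xL}$, and differentiating,
\[
\abs{h'(x)}=\frac{\abs{L+xL'}}{(xL)^2}\le\frac{L+1}{(xL)^2}\le\frac{C}{x^2L},
\]
using $L\ge\log 2$. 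Combining these with $\abs{p_1}\le\abs{p}$, $\abs{p}^2\le x$ and $\abs{p}^4\le x^2$, each of the three terms is bounded by $Cc^2/(x^2L^2)$, whence $\abs{\nabla\hat v^*(p)}^2\le Cc^2(\lambda+\abs{p}^2)^{-2}L(\lambda+\abs{p}^2)^{-2}$.

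Next I would handle the supremum. For $\abs{r-p}<\abs{p}/3$ one has $\tfrac23\abs{p}\le\abs{r}\le\tfrac43\abs{p}$, so $\lambda+\abs{r}^2$ is comparable to $\lambda+\abs{p}^2$ with universal constants; since $L$ is comparable on comparable arguments (it is bounded below by $\log 2$ and grows like $\abs{\log x}$ at $0$ and like $\log x$ at $\infty$), the pointwise bound survives the supremum up to a constant factor. Inserting this into $J_{32}'$, bounding $\hat V(p)\le\hat V(0)$, and passing to polar coordinates reduces the estimate to
\[
J_{32}'(\hat v^*)\le C\int_0^\infty\frac{\rho^3}{(\lambda+\rho^2)^2 L(\lambda+\rho^2)^2}\,d\rho
\le C\int_0^\infty\frac{dx}{x\,L(x)^2},
\]
after substituting $x=\lambda+\rho^2$ and using $\rho^2\le x$.

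The remaining obstacle is purely the convergence of this last integral, and this is precisely where the specific choice of $h$ pays off. Because $L(x)\sim\abs{\log x}$ as $x\to0$ and $L(x)\sim\log x$ as $x\to\infty$, the integrand behaves like $x^{-1}\abs{\log x}^{-2}$ at both ends, which is integrable (it becomes $\int du/u^2$ after $u=\log x$), while near $x=1$ the bound $L\ge\log 2$ keeps it finite. Hence $\int_0^\infty x^{-1}L(x)^{-2}\,dx<\infty$ and $J_{32}'(\hat v^*)\le C$, as claimed. I expect the only delicate bookkeeping to be the comparability of $L$ across the ball in the supremum step; everything else is a direct computation driven by the engineered denominator in \eqref{choice}.
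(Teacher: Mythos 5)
Your proposal is correct and follows essentially the same route as the paper's own proof: compute $\nabla\hat v^*$ explicitly, use $\abs{xh'(x)}\le C\abs{h(x)}$ to get $\abs{\nabla\hat v^*(p)}^2\le C\,h(\lambda+\abs{p}^2)^2$, absorb the supremum over $\{\abs{r-p}<\abs{p}/3\}$ by comparability of $\lambda+\abs{r}^2$ with $\lambda+\abs{p}^2$, and conclude from the integrability of $x^{-1}\log^{-2}(x+x^{-1})$ (equivalently, local integrability of $\abs{p}^{-2}\log^{-2}$ near the origin in $\R^2$). The only cosmetic difference is that you phrase the final step as a one-dimensional substitution while the paper cites local integrability directly; both are the same computation.
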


Finally, from \eqref{srbp_upper_bound1} and \eqref{srbp_bound_on_J32} it follows that
\[
(\varphi,R_\lambda\varphi)> C\log\abs{\log \lambda}
\]
which gives the lower bound  Theorem \ref{thm:main} for the SRBP.

\subsection{Proof of Lemma \ref{lem:dcgf_D_bound}, \ref{lem:srbp_D_bound} and \ref{lem:harmadik}}
\label{ss:bounds_on_D}

\begin{proof}[Proof of Lemma \ref{lem:dcgf_D_bound}]
By the rotational symmetry we may assume that $p>0$ is real. Using the decay of $\hat V$ the integral on $|q|>2$ is bounded by a fixed constant. Since $\hat V$ is bounded it is enough to bound
\begin{equation}
\label{egy}
\int_{|q|\le 2}
\frac{(p\times q)^2 }{\abs{p}^2\abs{q}^2} \frac{1}{\lambda+\abs{p-q}^2}
dq.
\end{equation}
Let $q=(r \cos(t), r \sin(t))$ then we may rewrite \eqref{egy} as
\begin{equation}
\label{ketto}
\int_{0}^2 \int_0^{2\pi}
\frac{p^2 r^2 \sin^2(t) }{p^2 r^2} \frac{1}{\lambda+\abs{p-r e^{it}}^2} r dt dr=\int_{0}^2 \int_0^{2\pi}
\frac{\sin^2(t)}{\lambda+r^2+p^2-2 p r \cos(t)} r dt dr.
\end{equation}
If $A\ge B\ge 0$ then
\[
\frac{\sin^2(t)}{A+B \cos(t)}+\frac{\sin^2(\pi+t)}{A+B \cos(\pi+t)}=\frac{2 A\sin^2(t)}{A^2-B^2 \cos^2(t)}\le \frac{2}{A}
\]
which shows that after integrating in $t$ in \eqref{ketto}:
\[
\int_{0}^2 \int_0^{2\pi}
\frac{\sin^2(t)}{\lambda+r^2+p^2-2 p r \cos(t)} r dt dr\le \int_{0}^2
\frac{2\pi}{\lambda+r^2+p^2} r dr=\log(\lambda+p^2+4)-\log(\lambda+p^2).
\]
Collecting all our estimates and using $\lambda<1, |p|\le 1$ the statement of the lemma follows.
\end{proof}

\begin{proof}[Proof of Lemma \ref{lem:srbp_D_bound}]
By the decay of $\hat V$ the integral on $|p-q|>2$ is  bounded by a fixed constant. Thus we only need to bound
\[
\int_{|p-q|\le 2}
\hat V(q) \frac{1}{\lambda + \abs{p-q}^2} \ind{\abs{p-q}\ge \abs{p}/3} dq=\int_{|p|/3}^2 \frac{1}{\lambda+x^2} xdx.
\]
The last integral is $\log(\lambda+2)-\log(\lambda+|p|^2/9)$ from which the lemma follows.
\end{proof}

\begin{proof}[Proof of Lemma \ref{lem:harmadik}]
Recall the definition of the variational function $\hat v^*$ from \eqref{choice}. In order to avoid heavy notation we will drop the $*$ from $\hat v^*$ in the subsequent computations. We have
\[
\nabla \hat v(p) = \big(h(\lambda+\abs{p}^2) +  2 p_1^2 h'(\lambda+\abs{p}^2), 2 p_1 p_2 h'(\lambda+\abs{p}^2)
\big)
\]
and hence
\begin{align}
\notag
\abs{\nabla \hat v(p)}^2
&\le
2 {h(\lambda+\abs{p}^2)}^2 + 8 |p|^4 {h'(\lambda+\abs{p}^2)}^2
\\
\label{nablav}
&\le
2 {h(\lambda+\abs{p}^2)}^2 + 8 (\lambda+\abs{p}^2)^2 {h'(\lambda+\abs{p}^2)}^2.
\end{align}
From the explicit form of $h$:
\[
\abs{h'(x)x}
=
\abs{
\frac{1-x^2} {x (1+x^2) \log^2\left(\frac{1}{x}+x\right)}-
\frac{1}{x \log\left(\frac{1}{x}+x\right)}
}
\le
2 \abs{\frac{1}{x \log\left(\frac{1}{x}+x\right)}}
=
2 \abs{h(x)}
\]
which together with \eqref{nablav} yields
\[
\abs{\nabla \hat v(p)}^2
\le
34 {h(\lambda + \abs{p}^2)}^2.
\]
Thus it is enough to bound
\[
\int_{\R^2}
\hat V(p) \abs{p}^2 \sup_{r:\abs{r-p}<\abs{p}/3} {h(\lambda+\abs{r}^2)}^2 dp.
\]
Note that from the definition of $h(x)$ (see \eqref{choice}) we have
\begin{equation}
\label{last}
\abs{p}^2 \sup_{r: \abs{r-p}\le \abs{p}/3} {h(\lambda+\abs{r}^2)}^2
\le
81 \frac{1}{\abs{p}^2 {\log^2(\abs{p}^2/9+9 \abs{p}^{-2})}}.
\end{equation}
The lemma now follows from the fact that $\hat V(p)$ is bounded and the function on the right hand side \eqref{last} is locally integrable near $\abs{p}=0$ in $\R^2$.
\end{proof}

\subsection{The non-isotropic SRBP}
\label{ss:nonisotrop}

The proof of the lower bound is considerably simpler in that case. We use the same strategy as before, setting
\[
D(\lambda,|p|)=4 \int_{\R^2} \hat V(p) \frac{1}{\lambda+|p-q|^2}dq
\]
by Schwarz's inequality we get
\[
J_3(\hat u)\le \int_{\R^2} \hat V(p) D(\lambda,|p|) p_1^2 \hat u(p)^2 dp.
\]
As in the DCGF case we have that if $\lambda<1$ and $|p|\le 1$ then
\[
D(\lambda,|p|)\le C \abs{\log(\lambda+p^2)}.
\]
Using the same arguments as before we get the following lower bound for sufficiently small $\lambda$:
\begin{eqnarray*}
(\varphi, R_\lambda \varphi)
&\ge&
\int_{\R^2} \ind{\abs{p}<1}
\hat V(p) \frac1{
\lambda + |p|^2+ C p_1^2 \abs{\log(\lambda + \abs{p}^2)}} dp.\\
&\ge& \int_{\R^2} \ind{\abs{p}<1/2}
\hat V(p) \frac1{
\lambda + |p|^2+ C p_1^2 \abs{\log(\lambda)}} dp.
\end{eqnarray*}
Changing to polar coordinates and using that $\hat V(p)>C'>0$ for $|p|$ small enough we have the lower bound
\begin{eqnarray*}
C'\int_0^\eps\int_0^{2\pi} \frac{r}{\lambda+r^2+C |\log\lambda| r^2 \cos^2 \alpha} d\alpha dr&=&C'\int_0^\eps \frac{2 \pi r }{\sqrt{\left(r^2+\lambda \right) \left(C |\log \lambda| r^2+r^2+\lambda
   \right)}}dr\\
&\ge & C'' \sqrt{|\log \lambda|}.
\end{eqnarray*}
This proves Theorem \ref{thm:masodik}.

\section{Appendix: The Alder-Wainwright scaling argument for superdiffusivity of tracer motion}
\label{s:appendix}

We reproduce the nonrigorous, nevertheless very instructive scaling argument due to B.J. Alder and T.E. Wainwright, respectively, to D. Forster, D. Nelson and  M. Stephen, which sheds sharp light on the origins of superdiffusivity in tracer particle motions. The original papers are \cite{alder_wainwright_67}, \cite{alder_wainwright_70}, \cite{forster_nelson_stephen_70}, see also \cite{spohn_91}.

\subsection{General formal setup and notation}
\label{ss:app_general}

Let $t\mapsto X(t)\in\R^d$ be a random motion of a tracer particle, with stationary and ergodic increments. The motion is performed in some random environment which also evolves in time. The time evolution of the tracer particle and that of the background environment may mutually influence one another.

Usually we decompose the random motion as sum of a martingale and its compensator:
\begin{equation}
\label{martingale+compensator}
X(t)=M(t) + \int_0^t V(s)ds,
\end{equation}
where $M(t)$ is a square integrable martingale with stationary and ergodic increments and the compensator $V(t)$ is a stationary and ergodic process which is also square integrable. We'll call  $V(t)$ the instantaneous velocity of the tracer. We are interested in understanding the origins of the possibly superdiffusive behaviour of $X(t)$. Since $M(t)$ is anyway diffusive, we don't care much about it. The main contribution to the superdiffusive behaviour anyway comes from the compensator (integral term) on the right hand side of \eqref{martingale+compensator}.

It is assumed that the instantaneous velocity comes from some background velocity field $U(t,x)$ as follows:
\begin{equation}
\label{velocity}
V(t)=U(t,X(t)).
\end{equation}
It is important to note that we think about a joint random dynamics $t\mapsto\big(X(t),U(t, \cdot)\big)$. The process of the velocity field $t\mapsto U(t,\cdot)$ is usually \emph{not stationary} while the instantaneous velocity of the tracer given in \eqref{velocity} is.

We will consider here only the \emph{isotropic} case. The results of similar considerations applied to non-isotropic cases will be also summarized at the end of this appendix.

The \emph{correlations of the velocity field}:
\[
K(t,x):=
\expect{ U(0,0)\cdot U(t,x) }.
\]
The \emph{velocity autocorrelation function}:
\[
C(t):=
\expect{V(0) \cdot V(t)}
\]
Mind that the velocity process $V(t)$ is assumed stationary (and ergodic), thus
\[
\expect{V(s) \cdot V(t)} = C(t-s).
\]
The \emph{variance} of the displacement is:
\[
E(t):=
\mathbf{E} \abs{X(t)}^2,
\qquad
\tilde E(t):=
\mathbf{E} \, {\abs{\int_0^t V(s)}^2}.
\]
Since the martingale part in \eqref{martingale+compensator} is anyway diffusive, in case of superdiffusive behaviour
\begin{align*}
E(t) \asymp \tilde E(t).
\end{align*}
By stationarity of $V(t)$
\begin{align*}
\label{green_kubo}
\tilde E(t)=2\int_0^t (t-s) C(s) ds,
\end{align*}
and hence
\begin{equation}
\label{green_kubo_asymp}
\tilde E(t)\asymp t \int_0^t C(s) ds.
\end{equation}
We shall refer to \eqref{green_kubo_asymp} as the \emph{asymptotic form of the Green-Kubo formula}.

\subsection{Scaling assumptions and computation}
\label{ss:app_scaling}

\medskip
\noindent
{\bf Scaling of the displacement:}
We assume that $X(t)$ is of order
\[
\alpha(t)=t^{\nu}(\log t)^\gamma.
\]
More explicitly, assume that
\begin{equation}
\label{scaling_X_iso}
\prob{X(t)\in dx} \asymp \alpha(t)^{-d}
\varphi(\alpha(t)^{-1} x) dx,
\end{equation}
where $\varphi:\R^d\to \R_+$ is a density, which is regular at $x=0$ and decays fast at $\abs{x}\to\infty$.

\medskip
\noindent
{\bf Scaling of the velocity field:}
We also assume that the correlations of the velocity field $U(t,x)$ scale as
\[
K(t,x) \asymp \beta(t)^{-d}\psi(\beta(t)^{-1} x).
\]
Note that under this assumption
\[
\int_{\R^d} K(t,x) dx \asymp \mathrm{const.}
\]
This corresponds to some kind of \emph{conservation of momentum} carried by the velocity field.

\medskip
\noindent
{\bf Main scaling assumptions:}
We make two important assumptions

\begin{enumerate}[1.]

\item
\emph{Regularity of $\psi$ and $\varphi$:}
The following regularity conditions hold:
\begin{equation}
\label{psi_L1_iso}
\int_{\R^d}\abs{\psi(x)} dx <\infty,
\qquad
\hat\psi(0) >0.
\end{equation}

\item
\emph{Displacement scales on faster order than the velocity-field correlations:}
\begin{equation}
\label{beta_less_than_alpha_iso}
\beta (t) = \Ordo (\alpha(t)).
\end{equation}
\end{enumerate}
Next we compute the velocity autocorrelation function:
\begin{align}
\notag
C(t)
&=
\expect{V(0)\cdot V(t)}
=
\expect{U(0,0)\cdot U(t,X(t))}
\\
\notag
&\approx
\int_{\R^d} K(t,x) \prob{X(t)\in dx}
&&
{\tt{decoupling}}
\\
\notag
&\asymp
\int_{\R^d} \beta(t)^{-d}\psi(\beta(t)^{-1}x)  \alpha(t)^{-d}\varphi(\alpha(t)^{-1}x) dx
&&
{\tt{scaling}}
\\
\notag
&=
\alpha(t)^{-d}\int_{\R^d} \psi(x)\varphi(\beta(t) \alpha(t)^{-1} x) dx
\\
\label{compute_C(t)_iso}
&\asymp
\alpha(t)^{-d}
\end{align}
Of course, the second step (decoupling) is the shaky one. In the last step we used the main scaling asumptions \eqref{psi_L1_iso} and \eqref{beta_less_than_alpha_iso}. Regularity of $\varphi$ at $x=0$ was also assumed.

\subsection{Conclusions}
\label{ss:app_conclusions}

From the scaling assumption \eqref{scaling_X_iso} it follows that
\begin{equation}
\label{scaling_tilde_D_iso}
\tilde E(t) \asymp \alpha(t)^2.
\end{equation}
On the other hand, using the Green-Kubo formula \eqref{green_kubo_asymp} and the computations in \eqref{compute_C(t)_iso} we get
\begin{equation}
\label{tilde_D_asymp_from_gk_iso}
\tilde E(t)\asymp t \int^t \alpha(s)^{-d} ds.
\end{equation}
The only choices of the scaling function $\alpha(t)$ consistent with both \eqref{scaling_tilde_D_iso} and \eqref{tilde_D_asymp_from_gk_iso}, are
\begin{align*}
\mathbf{d=1:}
&&&\qquad
\nu=\frac23, \quad \gamma=0,
&&\qquad
E(t)\asymp t^{4/3},
\\
\mathbf{d=2:}
&&&\qquad
\nu=\frac12, \quad \gamma=\frac14,
&&\qquad
E(t)\asymp t(\log t)^{1/2},
\\
\mathbf{d=3:}
&&&\qquad
\nu=\frac12, \quad \gamma=0,
&&\qquad
E(t)\asymp t.
\end{align*}

If $d=2$ and the system is non-isotropic in the sense that $X_2(t)$ is assumed diffusive and $X_1(t)$ possibly superdiffusive, very similar considerations and computations lead to the following conclusion:
\begin{align*}
\mathbf{d=2:}
&&&\qquad
\nu=\frac12, \quad \gamma=\frac13,
&&\qquad
E(t)\asymp t (\log t)^{2/3}.
\end{align*}

\bigskip

\begin{ack}
The work of BT was partially supported by OTKA (Hungarian National Research Fund) grant K 60708. BV was partially supported by the National Science Foundation grant DMS-0905820. BV thanks J. Quastel for many stimulating conversations.
\end{ack}

\vskip4cm

\noindent
{\sc Addresses of authors:}
\\[15pt]
BT:
Institute of Mathematics, Budapest University of Technology, Egry J\'ozsef u.\ 1, Budapest 1111, Hungary,
 {\tt balint@math.bme.hu}
\\[15pt]
BV:
Department of Mathematics, University of Wisconsin -- Madison, 480 Lincoln Drive, Madison WI 53706, USA, {\tt valko@math.wisc.edu}

\end{document}